\newenvironment{abstracts}{%
	\ifx\maketitle\relax
	\ClassWarning{\@classname}{Abstract should precede
		\protect\maketitle\space in AMS document classes; reported}%
	\fi
	\global\setbox\abstractbox=\vtop \bgroup
	\normalfont\Small
	\list{}{\labelwidth\z@
		\leftmargin3pc \rightmargin\leftmargin
		\listparindent\normalparindent \itemindent\z@
		\parsep\z@ \@plus\p@
		
		\itemsep\medskipamount
	}%
}{%
	\endlist\egroup
	\ifx\@setabstract\relax \@setabstracta \fi
}
\newcommand{\abstractin}[1]{%
	\otherlanguage{#1}%
	\item[\hskip\labelsep\scshape\abstractname.]%
}
\theoremstyle{plain}
\newtheorem{thm}{Theorem}[section]
\newtheorem{con}{Conjecture}
\newtheorem*{thm*}{Theorem}
\newtheorem{cor}[thm]{Corollary}
\newtheorem*{cor*}{Corollary}
\newtheorem{lem}[thm]{Lemma}
\newtheorem*{lem*}{Lemma}
\newtheorem*{prop*}{Proposition}
\theoremstyle{definition}
\newtheorem*{defn*}{Definition}
\newtheorem{rem}[thm]{Remark}
\newtheorem*{Ex*}{Example}
\newtheorem*{question}{Question}
\DeclareRobustCommand{\SkipTocEntry}[5]{}
\newcounter{sidenote}
\renewcommand{\epsilon}{\varepsilon}
\newcommand{\re}{\text{Re}}
\newcommand{\C}{\mathbb{C}}
\newcommand{\K}{\mathbb{K}}
\newcommand{\R}{\mathbb{R}}
\newcommand{\Z}{\mathbb{Z}}
\renewcommand{\phi}{\varphi}
\def\semicolon{;}
\def\applytolist#1{
	\expandafter\def\csname multi#1\endcsname##1{
		\def\multiack{##1}\ifx\multiack\semicolon
		\def\next{\relax}
		\else
		\csname #1\endcsname{##1}
		\def\next{\csname multi#1\endcsname}
		\fi
		\next}
	\csname multi#1\endcsname}
\def\calc#1{\expandafter\def\csname c#1\endcsname{{\mathcal #1}}}
\def\bbc#1{\expandafter\def\csname bb#1\endcsname{{\mathbb #1}}}
\def\bfc#1{\expandafter\def\csname bf#1\endcsname{{\mathbf #1}}}
\def\sfc#1{\expandafter\def\csname s#1\endcsname{{\sf #1}}}
\def\fc#1{\expandafter\def\csname f#1\endcsname{{\mathfrak #1}}}
\begin{document}

	\title[Deformative Magnetic Marked Length Spectrum Rigidity]{Deformative Magnetic Marked Length Spectrum Rigidity}
	
	\author[J. Marshall Reber]{James Marshall Reber}
	\address{100 Math Tower, 231 W 18th Ave, Columbus, OH 43210}
	\email{\href{mailto:marshallreber.1@osu.edu}{marshallreber.1@osu.edu}}
	
	
	
	\begin{abstracts}
		\abstractin{english}Let $M$ be a closed surface, $\{g_s \ | \ s \in (-\epsilon, \epsilon)\}$ be a smooth family of Riemannian metrics on $M$, and let $\{\lambda_s : M \rightarrow \R \ | \ s \in (-\epsilon, \epsilon)\}$ be a smooth family of smooth functions on $M$. 
		We show that if the magnetic curvature of each $(g_s, \lambda_s)$ is negative, the lengths of each periodic orbit remains constant as the parameter $s$ varies, and $\text{Area}(g_s) = \text{Area}(g_0)$, then there exists a smooth family of diffeomorphisms $\{f_s : M \rightarrow M \ | \ s \in (-\epsilon, \epsilon)\}$ such that $f_s^*(g_s) = g_0$ and $f_s^*(\lambda_s) = \lambda_0$. This generalizes a result of Guillemin and Kazhdan \cite{GK} to the setting of magnetic flows.
	\end{abstracts}

	\maketitle
	

	\section{Introduction} \label{sec:intro}
	
	\addtocontents{toc}{\SkipTocEntry}
	\subsection*{Motivation and Main Results}
	If $M$ is a closed oriented surface with Riemannian metric $g$ and $\kappa \in C^\infty(M)$, then the \emph{magnetic flow} generated by the pair $(g,\kappa)$ is the flow on the unit tangent bundle $S_gM$ determined by the equation
	\begin{equation} \label{eqn:magdiff}
		\frac{D\dot{\gamma}}{dt} = (\kappa \circ \gamma) i \dot{\gamma},
	\end{equation}
	where $i$ is the almost complex structure given by a rotation by $\pi/2$ according to the orientation. We refer to the smooth function $\kappa$ as the \emph{magnetic intensity} and the pair $(g,\kappa)$ as the \emph{magnetic system}. Solutions to Equation \eqref{eqn:magdiff} are called \emph{magnetic geodesics} for the magnetic system $(g,\kappa)$. The \emph{magnetic curvature} of the magnetic flow generated by $(g, \kappa)$ is given by
	\[ \K \coloneqq K - X^\perp(\kappa) + \kappa^2,\]
	where $X^\perp$ is the horizontal vector field and $K$ is the Gaussian curvature.
	Our goal is to prove the following result.
	
		\begin{thm} \label{thm:magGK}
		Let $M$ be a closed oriented surface, $\{g_s \ | \ s \in (-\epsilon, \epsilon)\}$ a smooth family of Riemannian metrics on $M$, and $\{\kappa_s : M \rightarrow \R \ | \ s \in (-\epsilon, \epsilon)\}$ a smooth family of smooth functions on $M$. Suppose that for every $s \in (-\epsilon, \epsilon)$ we have that $\K_s < 0$, where $\K_s$ is the magnetic curvature of the magnetic flow generated by $(g_s, \kappa_s)$. If the lengths of corresponding periodic orbits of $(g_s, \kappa_s)$ and $(g_0, \kappa_0)$ are the same and $\text{Area}(g_s) = \text{Area}(g_0)$ for all $s \in (-\epsilon, \epsilon)$, then there exists a smooth family of diffeomorphisms $\{f_s : M \rightarrow M \ | \ s \in (-\epsilon, \epsilon)\}$ satisfying $f_s^*(g_s) = g_0$ and $f_s^*(\kappa_s) = \kappa_0$.
	\end{thm}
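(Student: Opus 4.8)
The plan is to differentiate the marked length spectrum hypothesis in the parameter $s$ and reduce the problem to a linear statement about the vanishing of the derivative of the magnetic system modulo the flow of a vector field, exactly as in the Guillemin--Kazhdan argument. Concretely, write $\dot g_s = \frac{d}{ds} g_s$ and $\dot\kappa_s = \frac{d}{ds}\kappa_s$. Since $\K_s < 0$ the magnetic flow $\phi_s$ on $S_{g_s}M$ is Anosov, hence all its closed orbits persist under perturbation and depend smoothly on $s$; the length of the closed orbit in the free homotopy class $c$ is a smooth function $\ell_s(c)$, and the hypothesis is that $\ell_s(c) \equiv \ell_0(c)$ for all $c$, so $\frac{d}{ds}\ell_s(c) = 0$. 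The first step is a first-variation formula: express $\frac{d}{ds}\ell_s(c)$ as the integral over the closed orbit of a function $f_s$ built from $\dot g_s$ and $\dot\kappa_s$ (the ``infinitesimal magnetic action''), so that vanishing of all these integrals says precisely that $f_s$ integrates to zero over every closed orbit of $\phi_s$.

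The second step is to invoke a Livšic-type theorem for the (topologically transitive, in fact Anosov) magnetic flow: a Hölder function whose integral over every closed orbit vanishes must be a coboundary, i.e.\ $f_s = X_s u_s$ for some function $u_s$ on $S_{g_s}M$, where $X_s$ is the generating vector field of the magnetic flow. The third step is the cohomological/tensorial analysis: decompose $f_s$ into its components along the fiberwise Fourier modes on $S_{g_s}M$ (the circle-bundle Fourier decomposition into $u_k$), observe that $\dot g_s$ contributes to modes $|k| \le 2$ and $\dot\kappa_s$ to mode $0$ (after accounting for the magnetic term), and use the coboundary equation together with the negativity of the magnetic curvature to run the Guillemin--Kazhdan--Croke--Sharafutdinov style ellipticity/positivity argument showing that the only solutions force $\dot g_s$ to be a Lie derivative $\mathcal L_{Z_s} g_s$ for a vector field $Z_s$ and $\dot\kappa_s = Z_s(\kappa_s)$ simultaneously. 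This is where I expect the main work to lie: one must show that a single potential $u_s$ controls both the metric and the magnetic intensity, so the vector field $Z_s$ obtained from the order-$2$ part of $u_s$ is automatically the one that also accounts for $\dot\kappa_s$ in mode $0$; handling the mode-$0$ equation and the coupling between the two tensors is the delicate point, and it is presumably where the hypothesis $\K_s < 0$ (rather than just $K_s < 0$) is essential.

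The final step is integration in $s$: once we know $\dot g_s = \mathcal L_{Z_s} g_s$ and $\dot\kappa_s = Z_s(\kappa_s)$ with $Z_s$ depending smoothly on $s$, we solve the nonautonomous ODE $\frac{d}{ds} f_s = -Z_s \circ f_s$ (or the appropriately-signed flow equation) with $f_0 = \mathrm{Id}$ to obtain a smooth family of diffeomorphisms $f_s$, and then check that $\frac{d}{ds}(f_s^* g_s) = 0$ and $\frac{d}{ds}(f_s^* \kappa_s) = 0$, so that $f_s^* g_s = g_0$ and $f_s^* \kappa_s = \kappa_0$ for all $s$. The regularity bookkeeping — that $u_s$, and hence $Z_s$, can be chosen smoothly in $s$ (using smooth dependence of the Anosov splitting and of the solution to the coboundary equation) — is routine but must be stated carefully. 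I would also need, as a preliminary, to set up the magnetic analogues of the spherical-harmonic operators $\eta_\pm$ on $S_{g_s}M$ and record the commutation relations they satisfy in the presence of the magnetic term $\kappa$, since those are the algebraic backbone of step three.
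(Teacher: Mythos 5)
Your outline is right for the metric part, but there is a genuine gap at the heart of the argument: you assume that differentiating the length condition $\ell_s(c)\equiv\ell_0(c)$ produces a single Livšic condition $\int_\gamma f_s = 0$ for a function $f_s$ built from \emph{both} $\dot g_s$ and $\dot\kappa_s$, and that the subsequent Fourier/positivity analysis will therefore control both tensors at once, with the same vector field $Z_s$ accounting for $\dot\kappa_s$ via a mode-$0$ equation. That is not what happens. A symplectic first-variation computation (the paper's Lemma~\ref{lem:livsic}) shows that the derivative of the length constraint gives $\int_\gamma \beta = 0$ where $\beta = \dot g_0$ only; $\dot\kappa$ does not enter at all. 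Indeed, when you freeze the metric ($g_s\equiv g_0$, so $\beta\equiv 0$) and only vary $\kappa_s$, your proposed Livšic constraint degenerates to $0=0$ while the rigidity statement for $\kappa$ is still nontrivial. The information about $\dot\kappa$ is hidden indirectly in the structure of the conjugacies, not in the first variation of the length integrand.

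Consequently the proof splits into two stages with genuinely different mechanisms. Stage one is essentially what you describe: from $\int_\gamma\beta=0$ and a (magnetic) Livšic theorem one gets $Fu=\beta$, and from the negative-curvature Pestov/Carleman machinery one shows $u$ has degree $1$, producing a $1$-form $\delta$ whose dual vector field $Z_s$ yields isometries $f_s$ with $f_s^*g_0=g_s$. Stage two is where your plan has no traction: after pulling everything back to a fixed metric $g_0$ and replacing $\kappa_s$ by $\kappa_s'=(f_s^{-1})^*\kappa_s$, one still has to prove $\tfrac{d}{ds}\kappa_s'=0$, and the length-spectrum hypothesis contributes no further coboundary equation. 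The paper instead writes the variational vector field of the conjugacies along closed orbits as $S=x\,\dot\gamma + y\,i\dot\gamma$, derives the non-homogeneous magnetic Jacobi equation $F^2y+\K y = f_0$ with $f_0 = \tfrac{d}{ds}\kappa_s'|_{s=0}$, proves a finite-degree statement for $(y,Fy)$, and then runs a magnetic Carleman estimate (weights $\gamma_k^2 = 8^k k! e^{k\sigma}$, taking $\sigma$ large) to force $y$ to have degree $0$ and finally $f_0=0$. This Carleman estimate, not a single GK-style coboundary equation, is the technical core that handles $\kappa$; you would need to add this entire second mechanism for your proof to go through. Your observation that ``the coupling between the two tensors is the delicate point'' is accurate, but the resolution is that there is no such coupling at the linearized level, which is precisely why an independent argument for $\dot\kappa$ is required.
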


	
	\begin{rem} \label{rem:conjugacy}
		Note that $\K < 0$ implies that the corresponding magnetic flow is Anosov \cite{W}. Since the magnetic flows are Anosov for each $s \in (-\epsilon, \epsilon)$, we have that each periodic orbit admits a well-defined continuation for all $s \in (-\epsilon, \epsilon)$ whose length is a well-defined smooth function of $s$. We are assuming that this function along with the action is constant.
	\end{rem}
	
	Theorem \ref{thm:magGK} is related to the marked length spectrum rigidity conjecture. Recall that if $(M,g)$ is a closed Riemannian manifold with negative sectional curvature, then inside of every free homotopy class there is a unique closed geodesic for $g$. The \emph{marked length spectrum} is defined to be the function which takes a free homotopy class and returns the length of the unique closed geodesic inside of it. If a magnetic flow has negative magnetic curvature, then the marked length spectrum for the magnetic flow can be defined analogously. The following conjecture is well-known.
	
	\begin{con}[\cite{burnskatok}] \label{con}
		Let $M$ be a closed manifold. If $g$ and $g'$ be two negatively curved metrics on $M$ with the same marked length spectrum, then there is a diffeomorphism $f : M \rightarrow M$ such that $f^*(g) = g'$.
	\end{con}
	
	It was shown in \cite{GK} that if $M$ is a closed surface, then the conjecture holds provided the metrics can be connected by a smooth path of metrics with negative curvature along which the length spectrum is the same. Theorem \ref{thm:magGK} can be seen as the magnetic generalization -- if we can connect two negatively curved magnetic flows $(g, \kappa)$ and $(g', \kappa')$ by a path of negatively curved magnetic flows along which the marked length spectrum is constant, then there exists a diffeomorphism $f : M \rightarrow M$ so that $f^*(g) = g'$ and $f^*(\kappa) = \kappa'$.
	
	To the author's best knowledge, the only other progress towards a magnetic version of marked length spectrum rigidity can be found in \cite{grognet}. Adapting the arguments in \cite[Th\'{e}or\`{e}me 7.3]{grognet}, one can show that if a negatively curved magnetic flow shares the same marked length spectrum as a geodesic flow and the corresponding metrics have the same area, then the magnetic flow must be a geodesic flow and the metrics must be isometric. This result, along with Theorem \ref{thm:magGK}, leads us to the following question.
	
	\begin{question}
		Let $M$ be a closed oriented surface and let $(g,\kappa)$ and $(g',\kappa')$ be two magnetic flows with negative magnetic curvature and with the same marked length spectrum and $\text{Area}(g) = \text{Area}(g')$. Does there exist a diffeomorphism $f : M \rightarrow M$ so that $f^*(g) = g'$ and $f^*(\kappa) = \kappa'$?
	\end{question}

	We break up the proof of Theorem \ref{thm:magGK} into two steps. First we construct a smooth family of isometries $\{f_s : M \rightarrow M\}$ following the scheme of \cite{GK}. Note that we cannot directly use their arguments due to the magnetic intensities, and so appropriate modifications are made along the way. After constructing the isometries, we are able to reduce the problem to considering a family of magnetic systems $\{(g, \kappa_s) \ | \ s \in (-\epsilon, \epsilon)\}$ which all share a common metric $g$. The final step is to show that, in this setting, we must have $\smash{\frac{d}{ds}} \kappa_s = 0$.

	\addtocontents{toc}{\SkipTocEntry}
	\subsection*{Organization}
	The paper is organized as follows.
	\begin{itemize}
		\item In Section \ref{sec:preliminaries} we review the geometry of $S_gM$, the definition of a magnetic flow, Cartan's structural equations for magnetic flows, the Fourier decomposition of $S_gM$ following \cite{GK}.
		\item In Section \ref{subsec:outline}  we outline the proof of Theorem \ref{thm:magGK}, giving the argument without the details.
		\item In Section \ref{sec:proof11} we fill in the details of the proof of Theorem \ref{thm:magGK}.
	\end{itemize}
	
	\addtocontents{toc}{\SkipTocEntry}
	\subsection*{Acknowledgements}
	I would like to thank Andrey Gogolev for his advice and support throughout the project. I would also like to thank Gabriel Paternain and Javier Echevarr\'{i}a Cuesta for pointing out mistakes in earlier drafts.
	
	\section{Preliminaries} \label{sec:preliminaries}
	\addtocontents{toc}{\SkipTocEntry}
	\subsection{Geometry of $S_gM$}
	
	Let $M$ be a closed oriented surface. Given a Riemannian metric $g$, we denote the unit tangent bundle with respect to $g$ by $S_gM$, and we denote the footprint map by $\pi : TM \rightarrow M$. Since the manifold is oriented, we have an $S^1$-action on $S_gM$ given by rotation. We define the rotation flow by
	\[ \rho^t(x,v) \coloneqq (x, e^{it}v).\]
	The infinitesimal generator for the rotation flow is the \emph{vertical vector field}, denoted by $V$. If we let $g^t$ be the geodesic flow associated to $g$, then the infinitesimal generator for this flow is the \emph{geodesic vector field}, denoted by $X$. Finally, if we define the curve
	\[ \gamma_{(x,v)}(t) \coloneqq \pi \circ g^t(x,v),\]
	then the horizontal flow is given by
	\[ h^t(x,v) \coloneqq (\gamma_{(x,iv)}(t), Z(t)),\]
	where $Z(t)$ is the parallel transport of $v$ along $\gamma_{(x,iv)}(t)$. The infinitesimal generator for this flow is the \emph{horizontal vector field}, denoted by $X^\perp$. The vector fields $\{X, X^\perp, V\}$ give us a moving frame on $S_gM$ called \emph{Cartan's moving frame}. Dual to these vector fields are $1$-forms $\{\alpha, \beta, \psi\}$ on $S_gM$. Following \cite[Section 7.2]{singer} and \cite[Section 7]{MP}, we have \emph{Cartan's structural equations}:
	\begin{equation} \label{eqn:cartan} \begin{gathered}
			[V,X] = X^\perp, \ \ [V, X^\perp] = -X, \ \ [X, X^\perp] = KV, \\
			d\alpha = \psi \wedge \beta, \ \ d\beta = -\psi \wedge \alpha, \ \ d\psi = -(K \circ \pi) \alpha \wedge \beta,
		\end{gathered}
	\end{equation}
	where $K$ is the Gaussian curvature of $(M,g)$. Let $\Sigma \coloneqq -\alpha \wedge d\alpha = \alpha \wedge \beta \wedge \psi$ be a volume form on $S_gM$ and let $\mu$ be the corresponding Liouville measure. Finally, using \cite{plante}, we observe that if the unit tangent bundle $S_gM$ admits an Anosov flow, then the genus of $M$ must be at least two. The Gysin sequence \cite{bott} allows us to deduce the following.
	
	\begin{thm}[{\cite[Corollary 8.10]{MP}}] \label{thm:gysin}
		Let $\pi : S_gM \rightarrow M$ denote the footprint map restricted to the unit tangent bundle. If there exists a magnetic system $(g,\kappa)$ on $M$ so that the corresponding magnetic flow is Anosov, then $\pi^* : H^1(M,\R) \rightarrow H^1(S_gM, \R)$ is an isomorphism.
	\end{thm}

	\addtocontents{toc}{\SkipTocEntry}
	\subsection{Magnetic Flows}
	
	As mentioned in Section \ref{sec:intro}, if $g$ is a Riemannian metric and $\kappa \in C^\infty(M, \R)$, then we can associate a magnetic flow to the pair $(g,\kappa)$ by considering solutions to Equation \eqref{eqn:magdiff}. Observe that solutions to Equation \eqref{eqn:magdiff} correspond to closed curves which have geodesic curvature $b$ \cite{grognet}.

	Recall that associated to $g$ we have the \emph{area form} on $M$ defined by
	\[ (\Omega_a)_x(v,w) \coloneqq g_x(iv, w).\]
	Given any closed $2$-form $\sigma$ on $M$, there exists a $\kappa \in C^\infty(M, \R)$ such that $\sigma = \kappa \Omega_a.$ 
	This gives us a correspondence between smooth functions on $M$ and closed $2$-forms on $M$. If we define $H : TM \rightarrow \R$ by
	\[ H(x,v) \coloneqq \frac{1}{2}\|v\|_x^2,\]
	then the magnetic flow associated to the pair $(g, \kappa)$ is generated by the vector field $F$ satisfying
	\[ \iota_F(-d\alpha + \pi^*(\sigma)) = dH.\]
	Note that $\omega \coloneqq - d\alpha + \pi^*(\sigma)$ is a symplectic form, which we refer to as the \emph{magnetic symplectic form}. Hence, the magnetic flow is a Hamiltonian flow with respect to the above Hamiltonian and the magnetic symplectic form. The following variational observation will be useful throughout.
	
	\begin{lem}[{\cite[Lemma 4.1]{DP}}] \label{lem:minimizers}
	Let $(g, \kappa)$ be a magnetic system, and let $\gamma_s : [0,T_s] \rightarrow M$ be a smooth family of smooth closed curves, with $\gamma_0 \eqqcolon \gamma$ a closed magnetic geodesic for the magnetic system $(g,\kappa)$. If $S$ is the variational vector field along $\gamma$, then
	\[ \frac{d}{ds}\Big|_{s=0}\int_0^{T_s} H(\gamma_s(t)) dt = \int_0^T \kappa(\gamma(t)) \Omega_a(\dot{\gamma}(t), S(t)) dt. \]
	\end{lem}
	
	Using \cite[Lemma 7.7]{MP}, we see that the infinitesimal generator of the magnetic flow $F$ is of the form $F = X + \kappa V$. Furthermore, notice that we can write 
	\begin{equation} \label{eqn:primitive} \kappa \Omega_a = \sigma = c K\Omega_a + d \theta,\end{equation}
	where $\theta$ is a $1$-form on $M$ and 
	\[ c \coloneqq \frac{1}{2\pi \chi(M)} \int_M \kappa \Omega_a.\]
	With the aid of Equation \eqref{eqn:cartan}, we observe that if one restricts the magnetic symplectic form to $S_gM$, then we have 
	\begin{equation} \label{eqn:primitive2} \omega = d(-\alpha - c \psi + \pi^*(\theta)).\end{equation} 
	It is also easy to see that we have
	\begin{equation} \label{eqn:identity_symplectic} \iota_F \Sigma = \beta \wedge \psi + \kappa (\alpha \wedge \beta) = \omega.\end{equation} These facts together show that the magnetic flow on $S_gM$ is homologically full, i.e.\ every integral homology class has a magnetic geodesic representative \cite[Lemma 7.1]{Solly}.\footnote{One could also deduce this fact using \cite{ghys}.} This property is relevant due to the recent \emph{abelian Livshits theorem}, proven by Gogolev and Rodriguez Hertz in \cite{gogolevhertz0}. We state the result here in the language of magnetic flows for the readers convenience.
	
	\begin{thm} \label{thm:abelian_livshits}
		Suppose that the magnetic flow associated to the magnetic system $(g,\kappa)$ is Anosov. If $\varphi : S_gM \rightarrow \R$ is a smooth function such that 
		\begin{equation} \label{eqn:hom_triv_per_obs} \int_\gamma \varphi = 0 \end{equation}
		for every homologically trivial closed orbit $\gamma$ for the magnetic flow,  then there is a closed $1$-form $\omega$ on $M$ along with $u \in C^\infty(S_gM,\R)$ so that $\varphi =\omega + F(u)$, where $F$ is the infinitesimal generator of the magnetic flow.
	\end{thm}
	
	\begin{proof}
		Using \cite[Theorem 3.3]{gogolevhertz0}, we can deduce that there is a closed $1$-form $\xi$ on $S_gM$ and a smooth function $w \in C^\infty(S_gM,\R)$ so that $\varphi = \xi(F) + F(w)$. Using Theorem \ref{thm:gysin}, we see that there is a closed $1$-form $\omega$ on $M$ along with $q \in C^\infty(S_gM,\R)$ so that $\xi = \pi^*(\omega) + dq$. Observe that for $v \in S_gM$, we have $d_v\pi(F(v)) = v$, hence contracting this equation with $F$ yields $\xi(F) = \omega + F(q)$. The result now follows by letting $u \coloneqq q + w$.
	\end{proof}
	
	We highlight one particularly useful application of Theorem \ref{thm:abelian_livshits}, which is a direct consequence of \cite[Theorem B]{DP}.
	
	\begin{cor} \label{cor:one-form}
		Suppose that the magnetic flow associated to the magnetic system $(g,\kappa)$ is Anosov. If $\theta$ is a $1$-form on $M$ which satisfies Equation \eqref{eqn:hom_triv_per_obs} when viewed as a function on $S_gM$, then $\theta$ is closed.
	\end{cor}

	Finally, using Equation \eqref{eqn:primitive2}, one can deduce the following.
	
	\begin{lem}[{\cite[Proposition 4.5]{javier}}] \label{lem:const_cohom}
		Let $(g_1, \kappa_1)$ and $(g_2, \kappa_2)$ be two magnetic systems such that $\text{Area}(g_1) = \text{Area}(g_2)$. Denoting the area form corresponding to the metric $g_i$ by $\Omega_{a,i}$. If the corresponding magnetic flows to the magnetic systems $(g_1, \kappa_1)$ and $(g_2, \kappa_2)$ are smoothly conjugate, then $[\kappa_1 \Omega_{a,1}] = \pm [\kappa_2 \Omega_{a,2}].$
	\end{lem}

	\addtocontents{toc}{\SkipTocEntry}
	\subsection{Fourier Analysis on $S_gM$}
	
	We define the following sesquilinear form on $L^2(S_gM, \C)$:
	\[ (u,v) \coloneqq \int_{S_gM} u \overline{v} d\mu.\]
	Consider the family of diffeomorphisms of $S_{g_0}M$ generated by $V$, i.e.\ the family $\{e^{i\theta}\}$. Associated to these diffeomorphisms are operators $U_\theta$ on $L^2(S_gM, \C)$ defined by
	\[ U_\theta(f) \coloneqq f \circ e^{i \theta}.\]
	Since the maps $e^{i \theta}$ are volume preserving, we have that the operators $U_\theta$ are unitary. We also note the operators $U_\theta$ are strongly continuous, in the sense that
	\[ \lim_{\theta \rightarrow \theta_0} U_\theta(f) = U_{\theta_0}(f).\]
	Thus we are able to use Stone's theorem \cite{stone} to extend $V$ to a self-adjoint densely defined operator on $L^2(S_gM, \C)$. We denote this extension by $-iV$.
	By \cite[Lemma 3.1]{GK}, the space $L^2(S_gM, \C)$ decomposes orthogonally as a direct sum of eigenspaces of $-iV$:
	\[ L^2(S_gM, \C) = \bigoplus_{k \in \Z} H_k, \text{ where } H_k \coloneqq \{ f \in L^2(S_gM, \C) \ | \ -iV f = kf\}.\]
	If we let $\Omega_k \coloneqq C^\infty(S_gM, \C) \cap H_k$, then we see that for all $u \in C^\infty(S_gM, \C)$ we have a Fourier expansion
	\[ u = \sum_{k=-\infty}^\infty u_k, \text{ where } u_k \in \Omega_k = \{f \in C^\infty(S_gM, \C) \ | \ Vf = ikf\}.\]
	Let $u \in C^\infty(S_gM, \C)$. If there exists an $N$ so that $u_k = 0$ for all $|k| > N$, then we say that $u$ has \emph{finite degree}. If $N$ is the smallest positive integer such that $u_k = 0$ for all $|k| > N$, then we say that $u$ has \emph{degree $N$}. 

	Following \cite[Section 3]{GK}, we define the following first order elliptic operators
	\[ \eta^{\pm} : C^\infty(S_gM, \C) \rightarrow C^\infty(S_gM, \C), \ \ \eta^{\pm} \coloneqq \frac{X \mp iX^\perp}{2}.\]
	
	We observe the following.
	\begin{enumerate}[(i)]
		\item We have $X = \eta^+ + \eta^-$ and $X^\perp = i \eta^+ - i \eta^-$.
		\item If $F$ is the vector field generating the magnetic flow given by $(g,\kappa)$, then $F = \eta^+ + \eta^- + \kappa V$. Moreover, we see that $\{\eta^+, \eta^-, V\}$ spans $S_gM$ at each point.
		\item Using Cartan's structural equations \eqref{eqn:cartan}, one can show that
		\[\eta^{\pm} : \Omega_k \rightarrow \Omega_{k \pm 1}.\]
		Thus, we see that $\eta^+$ raises the degree and $\eta^-$ lowers the degree.
	\end{enumerate}
	
	Throughout, we will be working with functions that have degree at most $2$. The next observation will give us a magnetic analogue of \cite[Theorem 3.6]{GK} for \emph{symmetric $2$-tensors}, i.e.\ functions $v \in \bigoplus_{|k| \leq 2} \Omega_k$ satisfying $\overline{v_k} = v_{-k}$ for each $k$.
	
	\begin{thm}[{\cite[Theorem 1.1]{A}}] \label{thm:ainsworth}
		Let $(g,\kappa)$ be a magnetic system such that the corresponding magnetic flow is Anosov. If $v$ is a symmetric $2$-tensor and $Xu = v$, then $u \in \bigoplus_{|k| \leq 1} \Omega_k$. 
	\end{thm}
	
	Finally, the following will tell us when solutions $u$ to the equation $Xu = v$ can be interpreted as a $1$-form.
	
	\begin{lem}[{\cite[Lemma 4.1]{GK}, \cite[Proof of Theorem 12.2]{MP}]}] \label{lem:one_form}
		Let $M$ be a closed surface, and let $g$ be a Riemannian metric on $M$ with everywhere negative curvature. Suppose $\beta \in \Omega_{-2} \oplus \Omega_0 \oplus \Omega_2$ satisfies the condition that $\overline{\beta_{-2}} = \beta_2$. If $X\delta = \beta$ with $\delta \in \Omega_{-1} \oplus \Omega_1$, then $\delta$ is a $1$-form.
	\end{lem}

	\section{Outline of the Proof of Theorem \ref{thm:magGK}} \label{subsec:outline}
	
	From here on, we denote with a subscript $s$ the corresponding object for the magnetic system $(g_s, \kappa_s)$. 	We start by constructing a smooth family of smooth conjugacies between the corresponding magnetic flows. These will be necessary for constructing the isometries.

	\begin{lem} \label{lem:smoothconjugacies}
		Let $M$ be a closed oriented surface, $\{g_s \ | \ s \in (-\epsilon, \epsilon)\}$ a smooth family of Riemannian metrics on $M$, $\{\kappa_s : M \rightarrow \R \ | \ s \in (-\epsilon, \epsilon)\}$ a smooth family of smooth functions on $M$. Suppose that for every $s \in (-\epsilon, \epsilon)$ we have that $\K_s < 0$, where $\K_s$ is the magnetic curvature of the magnetic flow generated by $(g_s, \kappa_s)$. If the lengths of corresponding periodic orbits of $(g_s, \kappa_s)$ and $(g_0, \kappa_0)$ are the same for each $s \in (-\epsilon, \epsilon)$, then we have a smooth family of smooth conjugacies $\{h_s : S_{g_0}M \rightarrow S_{g_s}M\}$ between the magnetic flows with $h_0 = \text{Id}$. Furthermore, if $\text{Area}(g_s) = \text{Area}(g_0)$ for all $s \in (-\epsilon, \epsilon)$, then $h_s^*(\Sigma_s) = \Sigma_0$.
	\end{lem}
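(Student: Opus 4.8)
The plan is to produce $\{h_s\}$ by combining structural stability of Anosov flows with Livsic's theorem and its regularity theory, keeping careful track of the dependence on the parameter $s$.

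\textbf{Step 1: reduction to a fixed three-manifold.} The fibrewise normalization $v \mapsto v/\|v\|_{g_s}$ is a diffeomorphism $\psi_s : S_{g_0}M \to S_{g_s}M$ depending smoothly on $s$ with $\psi_0 = \id$; transporting the generator $F_s$ of the magnetic flow of $(g_s,\kappa_s)$ back by $\psi_s$ yields a smooth family $\{Z_s\}$ of vector fields on the fixed closed three-manifold $N := S_{g_0}M$ with $Z_0 = F_0$. By Wojtkowski's criterion (\cite{W}; see also \cite{BP}), $\K_s < 0$ forces each $Z_s$ to generate an Anosov flow $\phi_s^t$ on $N$; each such flow preserves a smooth volume and is therefore topologically transitive, and (Remark \ref{rem:conjugacy}) its periodic orbits are in a free-homotopy-preserving bijection with those of $\phi_0^t$. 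Since the magnetic flow preserves $\|v\| = 1$, the length of a closed magnetic geodesic is exactly its period as an orbit of $\phi_s^t$, so the hypothesis says precisely that $\phi_s^t$ and $\phi_0^t$ have the same marked period spectrum.

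\textbf{Step 2: a conjugacy for each fixed $s$.} Structural stability of Anosov flows supplies, for $s$ near $0$, a H\"older orbit equivalence $\rho_s : N \to N$ with $\rho_0 = \id$, carrying $\phi_0$-orbits onto $\phi_s$-orbits and intertwining the flows up to a time change by a positive H\"older function $a_s$, with $a_0 \equiv 1$; thus for a primitive closed $\phi_0$-orbit $\gamma$ of period $T$ the image orbit has period $\int_\gamma a_s$, and equality of marked period spectra gives $\int_\gamma (a_s - 1) = 0$ for every closed orbit. By Livsic's theorem for the transitive Anosov flow $\phi_0^t$ there is a H\"older function $p_s$ with $Z_0 p_s = 1 - a_s$, and then $h_s(x) := \phi_s^{\,p_s(x)}(\rho_s(x))$ is a genuine time-preserving conjugacy, $h_s \circ \phi_0^t = \phi_s^t \circ h_s$ for all $t$, with $h_0 = \id$.

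\textbf{Step 3: smoothness in $s$ and globalization.} To see that the $h_s$ fit into a smooth family and exist for \emph{all} $s \in (-\epsilon,\epsilon)$, I would run this construction in its parametrized form — the canonical perturbation theory of de la Llave--Marco--Moriy\'on, in which $\rho_s$, $a_s$, and hence $p_s$ and $h_s$ depend smoothly on $s$ in the appropriate (H\"older) topologies — or, equivalently and in the spirit of \cite{GK}, differentiate the conjugacy equation in $s$: the generating field $Y_s := (\partial_s h_s)\circ h_s^{-1}$ solves a linear cohomological equation over $\phi_s^t$ whose source integrates to zero over every closed orbit (again by the marked-length hypothesis), so Livsic's theorem produces $Y_s$, and integrating the nonautonomous equation $\partial_s h_s = Y_s \circ h_s$ from $h_0 = \id$ yields the family, the conjugacy relation propagating from $s = 0$ by construction of $Y_s$. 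Because the Anosov condition is open, the set of $s$ for which such a smooth conjugacy exists is open; it is closed by the same structural-stability argument applied near a limit parameter, and it contains $0$, hence equals $(-\epsilon,\epsilon)$.

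\textbf{Step 4: $C^\infty$ regularity and conclusion.} Each $h_s$ is a priori only H\"older, but it conjugates two $C^\infty$ Anosov flows on a three-manifold, so the Livsic regularity theory (de la Llave--Marco--Moriy\'on; smoothness of $h_s$ along the strong-stable, strong-unstable and flow directions, promoted to joint smoothness by Journ\'e's lemma) gives $h_s \in C^\infty(N,N)$, and the parametrized version of the same argument makes $s \mapsto h_s$ smooth as a map into $C^\infty(N,N)$. Composing with $\psi_s$ returns the desired smooth family of smooth conjugacies $h_s : S_{g_0}M \to S_{g_s}M$ between the magnetic flows. I expect the main obstacle to be precisely this double bookkeeping — $C^\infty$-regularity of each $h_s$ in the space variable together with $C^\infty$-dependence of the family on $s$ — which is why it is cleanest to carry out structural stability and the Livsic machinery at once in a parametrized Banach-space framework rather than to glue the pointwise-in-$s$ statements together by hand.
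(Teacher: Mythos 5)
Your Steps 1--3 track the paper's proof closely in spirit: the paper likewise builds a smooth-in-$s$ family of orbit equivalences (Theorem~A.1 of \cite{LMM}), upgrades each to a $C^0$ conjugacy by absorbing the time change via Livsic (the proof of Theorem~19.2.9 in \cite{KH}), and gets smoothness in $s$ from Theorem~2.2 of \cite{LMM}. The difference, and the gap, is in your Step~4.

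You claim that because $h_s$ ``conjugates two $C^\infty$ Anosov flows on a three-manifold,'' the Livsic regularity theory of \cite{LMM} together with Journ\'e's lemma gives $h_s\in C^\infty$. That inference is false in general: a H\"older conjugacy between two smooth Anosov flows (even on a closed three-manifold) need not be smooth. Livsic regularity bootstraps a H\"older \emph{cocycle solution} to $C^\infty$ when the coboundary data is smooth; it does not automatically promote a conjugating homeomorphism whose transverse behaviour is a priori only H\"older. Smoothness of the conjugacy is a genuine rigidity statement, and in the geodesic case already requires nontrivial input beyond Livsic (e.g.\ matching of periodic Lyapunov data, or equivalent structure). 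The paper closes this gap by invoking Theorem~1.2 of \cite{gogolevhertz}, a smooth rigidity theorem specifically for 3-dimensional \emph{volume-preserving} Anosov flows, which is exactly the setting here: the magnetic flows are Liouville-volume preserving (the paper notes this in the Introduction), each pair is $C^0$-conjugate by the preceding steps, and volume preservation is the extra hypothesis that makes the spatial $C^\infty$ regularity go through. Your argument as written would, if valid, prove a considerably stronger and false statement (that every topological conjugacy between smooth 3D Anosov flows is smooth), so the step needs to be replaced by the Gogolev--Rodriguez~Hertz theorem or an equivalent result that actually uses the volume-preserving hypothesis.

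A smaller remark on the alternative route you sketch in Step~3: differentiating the conjugacy equation in $s$ to define $Y_s := (\partial_s h_s)\circ h_s^{-1}$ already presupposes that $h_s$ is differentiable in $s$ (and spatially at least $C^1$), which is circular at that point of the argument; this is why the paper instead invokes Theorem~2.2 of \cite{LMM} directly to produce the smoothly parametrized family.
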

	
	With the smooth conjugacies in hand, we can construct the isometries. Define the following family of symmetric $2$-tensors on $TM$:
	\[ \beta_t \coloneqq \frac{d}{ds}\Big|_{s=t} g_s , \ \ \beta \coloneqq \beta_0.\]
	Note that we are viewing $g_s$ as a function on $TM$ given by
	\[(x,v) \mapsto (g_s)_x(v,v) =: \|v\|_s^2.\]
	Using \cite[Lemma 4.1]{GK}, we can write $\beta = \beta_{-2} + \beta_0 + \beta_2$ with $\beta_k \in \Omega_k$ and $\overline{\beta_{-2}} =  \beta_2$. The next step is to utilize Theorem \ref{thm:abelian_livshits} to show that, up to a closed $1$-form, $\beta$ integrates to zero over closed orbits of the magnetic flow given by $(g_0, \kappa_0)$.
	\begin{lem} \label{lem:livsic}
		There exists a closed $1$-form $\xi$ on $M$ so that for every closed orbit $\gamma$ of the magnetic flow given by $(g_0, \kappa_0)$, we have
		\[ \int_{\gamma} [\beta + \xi] = 0.\]
		In particular, there is a smooth function $u \in C^\infty(S_{g_0}M, \R)$ so that
		\[ Fu = \beta + \xi.\]
	\end{lem}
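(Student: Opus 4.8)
The plan is to compute the derivative at $s=0$ of the length of a periodic orbit of the magnetic flow $(g_s,\kappa_s)$ lying in a fixed free homotopy class, and to show this derivative equals $\frac{1}{2}\int_\gamma \beta$ (suitably interpreted), so that the hypothesis of constant marked length spectrum forces the integral to vanish. First I would fix a free homotopy class and let $\gamma_s$ denote the unique closed magnetic orbit of $(g_s,\kappa_s)$ in that class, with period (i.e. length, since magnetic orbits are unit speed) $T(s)$; by Remark \ref{rem:conjugacy} this is a smooth function of $s$, and by hypothesis $T(s)\equiv T(0)$. Parametrize $\gamma_s(t)$, $t\in[0,T(s)]$, as a smooth family of curves on $M$, with $\gamma_s$ satisfying the magnetic equation \eqref{eqn:magdiff} for $(g_s,\kappa_s)$ and $\|\dot\gamma_s\|_s\equiv 1$. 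Differentiating $T(s)=\int_0^{T(s)}\|\dot\gamma_s(t)\|_s\,dt$ in $s$ at $s=0$, the boundary term from $T'(0)$ drops out because the integrand is identically $1$, and what remains is $\int_0^{T(0)} \frac{d}{ds}\big|_{s=0}\|\dot\gamma_s(t)\|_s\,dt$. Splitting this via the chain rule into the term coming from the $s$-dependence of the metric and the term coming from the $s$-dependence of the curve, the latter is $\frac{d}{ds}|_{s=0}\big(\text{length of }\gamma_s\text{ measured with }g_0\big)$, and this vanishes to first order: $\gamma_0$ is a critical point of the $g_0$-length functional among curves in its homotopy class subject to the appropriate magnetic constraint, or more directly, the first variation of $g_0$-length of a unit-speed $g_0$-geodesic-like curve vanishes because the variation field is periodic; one handles the magnetic term in the first-variation formula by using that $\gamma_0$ solves \eqref{eqn:magdiff} and that the variation is through closed curves. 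The surviving term is $\frac12\int_0^{T(0)}\beta_{\gamma_0(t)}(\dot\gamma_0(t),\dot\gamma_0(t))\,dt = \frac12\int_{\gamma}\beta$, since $\beta=\frac{d}{ds}|_{s=0}g_s$ as a function on $TM$ restricted to the velocity field is precisely $\frac{d}{ds}|_{s=0}\|\dot\gamma_0\|_s^2 = 2\frac{d}{ds}|_{s=0}\|\dot\gamma_0\|_s$ (using $\|\dot\gamma_0\|_0=1$). Hence $\int_\gamma\beta = 0$ for every closed orbit $\gamma$ of $(g_0,\kappa_0)$.

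Once the integral vanishes on all closed orbits, the second assertion follows from the Livsic theorem: the magnetic flow of $(g_0,\kappa_0)$ is Anosov (as $\K_0<0$) and topologically transitive (volume preserving), so a smooth function on $S_{g_0}M$ whose integral over every periodic orbit is zero is a smooth coboundary, i.e. there is $u\in C^\infty(S_{g_0}M,\R)$ with $Fu=\beta$, where here $\beta$ is regarded as the function $(x,v)\mapsto \beta_x(v,v)$ on $S_{g_0}M$. This is exactly the smooth Livsic theorem for Anosov flows applied to the transfer of $\beta$ from $TM$ to $S_{g_0}M$, and is the magnetic analogue of Theorem 2.1 in \cite{GK}.

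The main obstacle I expect is the careful bookkeeping in the first-variation computation: one must justify that the variation field $\partial_s\gamma_s|_{s=0}$ can be taken periodic (this uses that the orbits persist smoothly and stay in the same free homotopy class, which is Remark \ref{rem:conjugacy}), and one must correctly handle the cross term between the metric variation and the curve variation together with the magnetic force term in \eqref{eqn:magdiff}, showing their combined contribution is zero. A clean way to organize this is to reparametrize each $\gamma_s$ to have constant speed on a fixed interval $[0,1]$ and to use the structure of the magnetic flow on $S_{g_s}M$: the orbit $\gamma_s$ lifts to a periodic orbit of the flow generated by $F_s=X_s+\kappa_s V$, the conjugacies of Lemma \ref{lem:smoothconjugacies} identify these periodic orbits, and then the length is recovered as the period, whose $s$-derivative one computes by differentiating the defining fixed-point/closing relation. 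Either route reduces, after the dust settles, to the identity that the only surviving first-order term is $\frac12\int_\gamma\beta$, and the rigidity hypothesis kills it.
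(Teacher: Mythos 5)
Your outline of the second half — once $\int_\gamma\beta = 0$ for all closed orbits, the smooth Liv\v{s}ic theorem for a transitive Anosov flow gives $u\in C^\infty(S_{g_0}M,\R)$ with $Fu=\beta$ — is correct and matches the paper. The difficulty is entirely in the first half, and here the proposal has a genuine gap. You split $T'(0)$ into a metric-variation term and a curve-variation term (the $s$-derivative at $0$ of the $g_0$-length of $\gamma_s$), and you assert the latter vanishes to first order. That claim is false for magnetic geodesics: $\gamma_0$ is \emph{not} a critical point of the $g_0$-length functional. The usual first-variation formula for length with a periodic variation field $S = x\,\dot\gamma_0 + y\,i\dot\gamma_0$ gives
\[
\frac{d}{ds}\Big|_{s=0} L_{g_0}(\gamma_s) \;=\; -\int_0^T \Big\langle \tfrac{D\dot\gamma_0}{dt},\, S \Big\rangle\, dt \;=\; -\int_0^T (\kappa_0\circ\gamma_0)\, y\, dt,
\]
using $\tfrac{D\dot\gamma_0}{dt} = \kappa_0 i\dot\gamma_0$ from \eqref{eqn:magdiff}. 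This does not vanish in general (and indeed cannot be argued away at this stage — one hasn't yet established any of the later identities such as $Fx=\kappa y$). You gesture at ``handling the magnetic term in the first-variation formula,'' but the length functional has no magnetic term, so nothing in your accounting produces a compensating $+\int \kappa_0\, y\, dt$.

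The paper resolves exactly this by not using the length functional directly but the symplectic/Hamiltonian structure $\omega_r = \omega_r^c + \pi^*\sigma_r$ with $\iota_{F_r}\omega_r = dH_r$, and integrating over the two-dimensional band swept out by the orbit continuations. Evaluating $\omega_0(F_r, S_r)$ produces \emph{two} terms: $\omega_0^c$ gives the curve-variation piece $-\langle \tfrac{D\dot\gamma_0}{dt}, S_0\rangle$, and $\pi^*\sigma_0$ gives precisely the extra $+(\kappa_0\circ\gamma_0)\, y_0$, and these cancel by \eqref{eqn:magdiff}. This cancellation is the actual content of the lemma, and your proposal does not supply it; the Hamiltonian framework (or some equivalent device that carries the magnetic $2$-form into the variational calculation) is needed, not merely a first variation of Riemannian length. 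Your closing remark about differentiating the closing relation through the conjugacies points in the right direction, but it is not carried out, and as written the key step fails.
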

	The above lemma along with Theorem \ref{thm:ainsworth} implies that $u$ has degree $1$. Furthermore, writing $\delta = u_{-1} + u_1$, we can rewrite $Fu = \beta + \xi$ as the following system of equations:
	\[ \begin{cases} X\delta = \beta, \\ Xu_0 + \kappa V \delta = \xi.\end{cases}\]
	Lemma \ref{lem:one_form} implies that $\delta$ is a $1$-form. Doing this procedure for every $s$, we get a corresponding family of $1$-forms $\delta_s$, and using \cite[Theorem 2.2]{LMM} we see that the $1$-forms vary smoothly with respect to $s$. Let $Z_s$ be the vector field dual to $\delta_s$ under the metric $g_s$. If we let $f_s$ be the smooth family of diffeomorphisms satisfying
	\[ Z_s = \frac{df_s}{ds} \circ f_s^{-1},\]
	then we see that $g_s$ and $g_s' \coloneqq f_s^*(g_0)$ satisfy the same differential equation with the same initial condition:
	\[ \beta_s = Z_s(g_s), \ \ \beta_s' = Z_s(g_s'), \text{ and } g_0 = g_0'\]
	By existence and uniqueness of solutions to differential equations, we must have that $g_s = g_s'$ for each $s \in (-\epsilon, \epsilon)$, and so we have constructed our family of isometries.
	
	As mentioned at the end Section \ref{sec:intro}, we can now reduce the problem using the isometries by considering the family of magnetic flows given by the metric $g_0$ and the smooth functions $(f_s^{-1})^*(\kappa_s)$. Let $\kappa_s' \coloneqq (f_s^{-1})^*(\kappa_s)$. The goal is to show that $\kappa_s'$ is constant with respect to $s$. To that end, observe that we can now write the family closed $2$-forms $\sigma_s$ associated to the magnetic system $(g_s, \kappa_s)$ as 
	\begin{equation} \label{eqn:family} \sigma_s = c K \Omega_a + d\theta_s,\end{equation}
	where $\theta_s$ is a family of $1$-forms on $M$. As deduced in the proof of \cite[Theorem 3.2.4]{McDuff}, we may assume that $\theta_s$ also varies smoothly in $s$. Let $\dot{\theta}_r \coloneqq \frac{d}{ds}|_{s=r} \theta_s$. Another application of Theorem \ref{thm:abelian_livshits} along with the Gauss-Bonnet theorem will yield the following.
	
	\begin{lem} \label{lem:livsic2}
		There exists a closed $1$-form $\eta$ on $M$ so that for every closed orbit $\gamma$ of the magnetic flow given by $(g_0, \kappa_0')$, we have 
		\[ \int_\gamma [\dot{\theta}_0 + \eta] = 0.\]
		In particular, $\dot{\theta}_0$ is a closed $1$-form on $M$.
	\end{lem}
	
	Note that the choice of $s=0$ was arbitrary, so this holds for all $s$. Taking a derivative of Equation \eqref{eqn:family} with respect to $s$ and using the fact that $\kappa_s' \Omega_a = \sigma_s$, we deduce that $\kappa_s'$ is constant in $s$, as desired.

	\section{Proof of Theorem \ref{thm:magGK}} \label{sec:proof11}
	
	We start by proving the existence of the smooth family of smooth conjugacies.
	
	\begin{proof}[Proof of Lemma \ref{lem:smoothconjugacies}]
		Using \cite[Theorem A.1]{LMM}, there exists a smooth family of orbit equivalences between the flows such that the lengths of corresponding closed orbits are the same. We use \cite[Theorem 6.3.9]{FH} and \cite[Theorem 2.2]{LMM} to upgrade each orbit equivalence to a $C^0$-conjugacy in such a way so that the family remains smooth. Finally, we use \cite[Theorem 1.2]{gogolevhertz} to get that the conjugating homeomorphisms are actually smooth.
		
		Denote the smooth family of smooth conjugacies by $h_s : S_{g_0}M \rightarrow S_{g_s}M$. Notice that there exists a smooth function $J_s \in C^\infty(S_{g_0}M, \R)$ so that $h_s^*(\Sigma_s) = J_s \Sigma_0$. Since the magnetic flow preserves the volume form, we see that 
	\[ J_s \Sigma_0 = h_s^*(\Sigma_s) = (\varphi_s^t \circ h_s)^*(\Sigma_s) = (h_s \circ \varphi_0^t)^*(\Sigma_s) = (J_s \circ \varphi_0^t) \Sigma_0.\]
	We deduce that $J_s$ is constant using the fact that $(g_0, \kappa_0)$ has a dense orbit. Furthermore, a change of variables argument shows that $J_s = \text{Area}(g_s)/\text{Area}(g_0)$, so the area assumption yields that $J_s \equiv 1$.
	\end{proof}
	
	Following Section \ref{subsec:outline}, we write
	\[ \beta_t \coloneqq \frac{d}{ds}\Big|_{s=t} g_s , \ \ \beta \coloneqq \beta_0.\]
	We now prove Lemma \ref{lem:livsic}.

	\begin{proof}[Proof of Lemma \ref{lem:livsic}]
		To help with notation, let $SM$ be the principal circle bundle over $M$ with fibers given by $S_xM = (T_xM \setminus \{0\})/\sim$, where $v \sim w$ if and only if $v = C w$ with $C > 0$. For each metric $g_s$, there is a bundle isomorphism $\zeta_s : S_{g_s}M \rightarrow SM$ which is defined by sending a vector to its equivalence class. Using the maps $\zeta_s$, we push all of the forms and flow to $SM$ and work on this common bundle; abusing notation, we use the same symbol to denote the corresponding object from $S_{g_s}M$ on $SM$. 
		
		Let $p : SM \rightarrow M$ be the projection map. As we saw in Section \ref{sec:preliminaries}, we have 
		\[\sigma_s = c_s K_s (\Omega_{a})_s + d\theta_s.\]
		Note that Lemma \ref{lem:const_cohom} yields $c_s = c_0 \eqqcolon c$ for every $s$, thus $p^*(\sigma_s) = d(-c \psi_s + p^*(\theta_s))$. Define $\tau_s \coloneqq -\alpha_s - c \psi_s + p^*(\theta_s)$, so $d\tau_s = \omega_s$ on $SM$. Furthermore, using Lemma \ref{lem:smoothconjugacies} and Equation \eqref{eqn:identity_symplectic}, we see that $h_s^*(\omega_s) = \omega_0$, and thus $h_s^*(\tau_s)-\tau_0$ is a smooth family of closed $1$-forms on $SM$. 
		
		Let $\gamma_0$ be a closed homologically trivial orbit for $(g_0, \kappa_0)$ with length $T$, and let $\gamma_s$ be the corresponding orbits for $(g_s, \kappa_s)$. Since $h_s^*(\tau_s) - \tau_0$ is closed, we have 
		\[ \int_{\gamma_0} \iota_{F_0}[h_s^*(\tau_s) - \tau_0] = 0, \]
		and using the fact that $h_s$ is a smooth conjugacy, we deduce 
		\begin{equation} \label{eqn:identity_conjugacy} \int_{\gamma_s} \iota_{F_s} \tau_s = \int_{\gamma_0} \iota_{F_0} \tau_0 \text{ for all } s \in (-\epsilon, \epsilon). \end{equation}
		Consider the parameterization given by 
		\[ \Gamma : [0,s] \times [0,T] \rightarrow SM, \quad \Gamma(s,t) \coloneqq (\gamma_s(t), \dot{\gamma}_s(t)).\]
		Denote the image of this parameterization by 
		\[B_s \coloneqq \{(\gamma_s(t), \dot{\gamma}_s(t)) \ | \ 0 \leq r \leq s, 0 \leq t \leq T\} \subseteq SM.\] 
		For simplicity, we write $F(r,t) \coloneqq F_r(\gamma_r(t), \dot{\gamma}_r(t))$ and $W(r,t) \coloneqq \frac{d}{ds} |_{s=r} (\gamma_s(t), \dot{\gamma}_s(t))$. With this, we define $\tau$ and $\alpha$ to be $1$-forms on $B_s$ satisfying $\tau(W) = 0 = \alpha(W)$, $\tau(F_r) = \tau_r(F_r)$, and $\alpha(F_r) = \alpha_r(F_r)$. In other words, these are the $1$-forms which ignore the variational direction, and along each orbit behave like the corresponding $1$-form. Using Stokes' theorem along with the fact that $\gamma_0$ is homologically trivial, we observe that 
		\[ 0 = \int_{B_s} d\tau = \int_0^s \int_0^T (\Gamma^*(d\tau))_{(r,t)}\left( \frac{d}{dt}, \frac{d}{dr}\right) dt dr = -\int_0^s \int_0^T  W(r,t)((\tau_r)_{(\gamma_r(t), \dot{\gamma}_r(t))}(F(r,t)) dt dr. \]
		On the other hand, observe that 
		\[ (d \tau)_{(\gamma_r(t), \dot{\gamma}_r(t))} (F(r,t), W(r,t)) = F(r,t) ((\tau_r)_{(\gamma_r(t), \dot{\gamma}_r(t))}(W(r,t))) + (d \tau_r)_{\gamma_r(t), \dot{\gamma}_r(t)} (F(r,t), W(r,t)),\]
		so
		\[ 0 = \int_{B_s} d\tau = \int_0^s \int_0^T (d \tau_r)_{(\gamma_r(t), \dot{\gamma}_r(t))} (F(r,t), W(r,t))dt dr.\]
		Following the same argument with $\alpha$ in place of $\tau$ and using the length assumption, we are left with
		\[ 0 =  \int_0^s \int_0^T p^*(\kappa_r (\Omega_a)_r)(F(r,t),W(r,t))dtdr.\]
		Taking the derivative of both sides with respect to $s$ and using Lemma \ref{lem:minimizers} along with the length assumption, we have $\dot{H}_0$ integrates to zero along every homologically trivial orbit. The result now follows by Theorem \ref{thm:abelian_livshits}.
	\end{proof}
	
	
	As mentioned in Section \ref{subsec:outline}, this was the missing ingredient needed for us to get our isometries. We now have a smooth family of diffeomorphisms $f_s : M \rightarrow M$ such that $f_s^*(g_0) = g_s$. We switch our focus to the family $\{(g_0, \kappa_s') \ | \ s \in (-\epsilon, \epsilon)\}$ where $\kappa_s' \coloneqq (f_s^{-1})^*(\kappa_s)$. We now prove Lemma \ref{lem:livsic2}.
	
	\begin{proof}[Proof of Lemma \ref{lem:livsic2}]
		Let $\gamma_0$ be a closed orbit for $(g_0, \kappa_0')$ which is homologically trivial and has length $T$, let $\gamma_s$ be the corresponding closed orbits for $(g_s, \kappa_s')$, and let 
		\[\hat{B}_s \coloneqq \{\gamma_r(t) \ | \ 0 \leq r \leq s, 0 \leq t \leq T\} \subseteq M\] 
		be the band swept out by these curves. Using Lemma \ref{lem:minimizers} along with the Gauss-Bonnet theorem, we deduce that 
		\[ 0 = \frac{d}{ds} \Big|_{s=0} \int_{\hat{B}_s} \sigma_0 = - c \frac{d}{ds} \Big|_{s=0} \int_{\gamma_s} \kappa_s + \frac{d}{ds} \Big|_{s=0} \int_{\gamma_s} \theta_0.\]
		On the other hand, notice that $\iota_{F_s} \tau_s = -1 - c \kappa_s' + \theta_s$. The length assumption along with Equation \eqref{eqn:identity_conjugacy} implies that 
		\[ 0 = - c \frac{d}{ds} \Big|_{s=0} \int_{\gamma_s} \kappa_s' + \frac{d}{ds} \Big|_{s=0} \int_{\gamma_s} \theta_s.\]
		Combining these observations yields that for every homologically trivial closed orbit $\gamma_0$ for $(g_0, \kappa_0')$, we have 
		\[ \int_{\gamma_0} \dot{\theta}_0 = 0,\]
		where the dot indicates derivative with respect to $s$.
		Using Corollary \ref{cor:one-form} and the fact that $s=0$ was arbitrary, we have that $\dot{\theta}_s$ is a closed form for each $s$. 
	\end{proof}
	
	\bibliographystyle{alpha}

\end{document}